\numberwithin{equation}{section}
\theoremstyle{plain}
\newtheorem{thm}{Theorem}[section]
\newtheorem{lemma}{Lemma}[section]
\begin{document}

\begin{frontmatter}
\title{Estimation of the Weighted Integrated Square Error of the Grenander Estimator by the Kolmogorov--Smirnov Statistic}
\runtitle{Integrated Square Error of the Grenander Estimator}

\begin{aug}
\author{\fnms{Malkhaz} \snm{Shashiahsvili}\ead[label=e1]{malkhaz.shashiashvili@tsu.ge}}

\runauthor{M. Shashiashvili}

\affiliation{Ivane Javakhishvili Tbilisi State University}

\address{Ivane Javakhishvili Tbilisi State University \\ Faculty of Exact and Natural Sciences \\ Department of Mathematics \\ 13 University St. Tbilisi 0186, Georgia \\
\printead{e1}\\ }

\end{aug}

\begin{abstract}
We consider in this paper the Grenander estimator of unbounded, in general, nonincreasing densities on the interval $[0,1]$ without any smoothness assumptions. For fixed number $n$ of i.i.d. random variables $X_1,X_2,\dots,X_n$ with values in $[0,1]$ and the nonincreasing density function $f(x)$, $0\leq x\leq 1$, we prove an inequality bounding the weighted integrated square error of the Grenander estimator with probability one by the classical Kolmogorov--Smirnov statistic. Further, we consider some interesting implications of the latter inequality
\end{abstract}

\begin{keyword}[class=MSC]
\kwd[Primary ]{62G$_{\rm XX}$}
\kwd[; secondary ]{62G05, 62G07}
\end{keyword}

\begin{keyword}
\kwd{Maximum likelihood estimators}
\kwd{consistent estimation}
\kwd{Grenander estimator}
\kwd{nonparametric estimation of nonincreasing densities}
\kwd{the weighted integrated square error}
\kwd{Kolmogorov--Smirnov statistic}
\end{keyword}

\end{frontmatter}

\section{Introduction}
\label{sec:1}

Nonparametric density estimation has mainly been devoted for a long time, to estimation of smooth densities using linear methods like kernel estimators with fixed bandwidth or projection estimators (truncated series expansions with estimated coefficients). Suppose we know only that $f(x)$, $0\leq x\leq 1$, is a nonincreasing density (unbounded and discontinuous, in general). It can very well be steep at some places and flat elsewhere Consequently, the best histogram for estimating $f(x)$, $0\leq x\leq 1$, need not be based on a regular partition as can be seen from the results of Birg\'{e} (1987, a, b). In this case a special estimator, which takes the form of a variable binwidth histogram has been known for a long time. It has been introduced by Grenander (1956) as the left derivative of the least concave majorant of the empirical distribution function, which is merely the nonparametric maximum likelihood estimator restricted to nonincreasing densities on $[0,1]$ (for a proof see, for instance, Grenander (1981)). The so-called Grenander estimator has been studied by Prakasa Rao (1969), Kiefer and Wolfowitz (1976), Grenander (1981), Groeneboom (1985) and Birg\'{e} (1989).

Let $X_1,X_2,\dots,X_n$ be i.i.d. random variables with values in $[0,1]$ and the nonincreasing density function $f(x)$, $0\leq x\leq 1$, and let $F(x)$, $0\leq x\leq 1$, be the corresponding cumulative distribution function with $F(0)=0$ and
\begin{equation}\label{eq:1.1}
    F(x)=\int\limits_0^x f(y)\,dy, \;\; 0\leq x\leq 1, \quad F(1)=\int\limits_0^1 f(y)\,dy=1.
\end{equation}

We shall assume that $f(x)$, $0\leq x\leq 1$, is a right-continuous version of the density function. Then it is evident that $f(x)$, $0\leq x\leq 1$, is a right derivative of the concave function $F(x)$ and its left limit $f(x-)$, $0<x\leq 1$, coincides with the left derivative of $F(x)$, $0\leq x\leq 1$. It is a well-known mathematical fact that $f(x)=f(x-)$ everywhere except a countable set of points $x$, $0\leq x\leq 1$. Consider now another right-continuous nonincreasing density function $\varphi(x)$, $0\leq x\leq 1$. We wish to introduce the integrated square distance
\begin{equation}\label{eq:1.2}
    \int\limits_0^1 (\varphi(x)-f(x))^2\,dx
\end{equation}
between the density functions $\varphi(x)$ and $f(x)$, but as we consider, in general, unbounded densities, satisfying
\begin{equation}\label{eq:1.3}
    \lim_{x\downarrow 0} f(x)=+\infty, \quad \lim_{x\downarrow 0} \varphi(x)=+\infty
\end{equation}
the expression \eqref{eq:1.2} becomes inconsistent.

Indeed if we consider the nonincreasing (and unbounded at $0)$ densities
\begin{equation}\label{eq:1.4}
    f_{\alpha}(x)=(1-\alpha)\cdot x^{-\alpha}, \;\; 0\leq x\leq 1, \;\;\text{where}\;\; \frac{1}{2}<\alpha<1,
\end{equation}
we can simply check, that
\begin{equation}\label{eq:1.5}
    \int\limits_0^1 f_{\alpha}^2(x)\,dx=+\infty.
\end{equation}

It turns out that for nonincreasing densities the following weighted integrated square distance
\begin{equation}\label{eq:1.6}
    \int\limits_0^1 (\varphi(x)-f(x))^2x\,dx
\end{equation}
has a full sense and it will be considered in Section \ref{sec:2}.

Let $F_n(x)$, $0\leq x\leq 1$, be the empirical distribution function, constructed from i.i.d. random variables $X_1,X_2,\dots,X_n$, having the nonincreasing (and unbounded at $0$, in general), density function $f(x)$, $0\leq x\leq 1$. As the corresponding \eqref{eq:1.1} distribution function $F(x)$ is absolutely continuous with $F(0)=0$, with probability one we have that the random variable
\begin{equation}\label{eq:1.7}
    X_{\min}=\min(X_1,X_2,\dots,X_n)
\end{equation}
is strictly positive and hence
\begin{equation}\label{eq:1.8}
    F_n(x)=0 \;\;\text{if}\;\; 0\leq x<X_{\min}\;\; \text{$(P$-a.s.$)$}.
\end{equation}
Write $\widehat{F}_n(x)$, $0\leq x\leq 1$, for the least concave majorant of $F_n(x)$, $0\leq x\leq 1$, and let $\widehat{f}_n(x)$ denote the right derivative of the latter concave majorant. We get from \eqref{eq:1.8} that
\begin{equation}\label{eq:1.9}
    \widehat{F}_n(0)=0 \;\; \text{$(P$-a.s.$)$}
\end{equation}
and also
\begin{equation}\label{eq:1.10}
    \widehat{f}_n(x)=constant, \;\;\text{if}\;\; 0\leq x<X_{\min} \;\; \text{$(P$-a.s.$)$}.
\end{equation}
We have evidently
\begin{equation}\label{eq:1.11}
\begin{gathered}
    \widehat{F}_n(x)=\int\limits_0^x \widehat{f}_n(y)\,dy, \;\; 0\leq x\leq 1, \\
    \widehat{F}_n(1)=\int\limits_0^1 \widehat{f}_n(y)\,dy=1
\end{gathered} \quad \text{$(P$-a.s.$)$}.
\end{equation}

As $F_n(x)$, $0\leq x\leq 1$, is the nondecreasing function. the same property holds for its least concave majorant $\widehat{F}_n(x)$ and hence $\widehat{f}_n(x)$, $0\leq x\leq 1$, is a nonnegative function. Moreover, the latter function is nonincreasing as the right derivative of the concave function. By its construction the function $\widehat{F}_n(x)$, $0\leq x\leq 1$, is piecewise linear concave function and as a result we get that the function $\widehat{f}_n(x)$, $0\leq x\leq 1$, is right-continuous step function, nonnegative and nonincreasing. The left limit $\widehat{f}_n(x-)$, $0<x\leq 1$, of the function $\widehat{f}_n(x)$, $0\leq x\leq 1$, coincides with the left derivative of the least concave majorant $\widehat{F}_n(x)$, $0\leq x\leq 1$, and hence it is the celebrated Grenander estimator of the unknown nonincreasing density function $f(x)$, $0\leq x\leq 1$.

In Section \ref{sec:2} we shall establish our main result (Theorem \ref{th:2.1}) which states that the following weighted integrated square error of the Grenander estimator
\begin{equation}\label{eq:1.12}
    \int\limits_0^1 (\widehat{f}_n(x-)-f(x-))^2x\,dx
\end{equation}
is bounded with probability one by the classical Kolmogorov--Smirnov statistic
\begin{equation}\label{eq:1.13}
    2\sup_{0\leq x\leq 1} |F_n(x)-F(x)|.
\end{equation}
From Theorem \ref{th:2.1} we shall deduce several interesting consequences:

First of all, the Grenander estimator $\widehat{f}_n(x-)$, $0<x\leq 1$, is the consistent estimate of the unknown nonincreasing density $f(x)$, $0\leq x\leq 1$, in the sense of the weighted integrated square distance \eqref{eq:1.6}, next, for arbitrary probability close to $1$, we can find such a number $n$ of observations $X_1,X_2,\dots,X_n$, that the weighted integrated square error \eqref{eq:1.12} will be small enough with the prescribed high probability, and the third consequence gives the bound of the quadratic risk of the Grenander estimator
\begin{equation}\label{eq:1.14}
    E\int\limits_0^1 (\widehat{f}_n(x-)-f(x))^2x\,dx\leq \sqrt{2\pi}\,n^{-1/2}.
\end{equation}

\section{The formulation and the proof of the preliminary lemmas and the main result}
\label{sec:2}

We start this section with two simple lemmas which are needed to establish the basic result of this paper.

\begin{lemma}\label{lem:2.1}
We have for arbitrary nonincreasing density $f(x)$, $0\!\leq\!x\!\leq~\!\!1$,
\begin{equation}\label{eq:2.1}
    0\leq f(x)\cdot x\leq F(x), \quad \lim_{x\downarrow 0} f(x)\cdot x=0.
\end{equation}
\end{lemma}

\begin{proof}
From the nonincreasing property of the function $f(x)$, $0\leq x\leq 1$, we get
\begin{equation}\label{eq:2.2}
    F(x)=\int\limits_0^x f(y)\,dy\geq \int\limits_0^x f(x)\,dy=f(x)\cdot x, \;\; x\geq 0, \;\; 0\leq x\leq 1.
\end{equation}
As $\lim\limits_{x\downarrow0} F(x)=F(0)=0$, we come to relations \eqref{eq:2.1}.
\end{proof}

From Lemma \ref{lem:2.1} we obtain
\begin{equation}\label{eq:2.3}
     0\leq \widehat{f}_n(x)\cdot x\leq \widehat{F}_n(x), \quad \lim_{x\downarrow 0} \widehat{f}_n(x)\cdot x=0.
\end{equation}

\begin{lemma}\label{lem:2.2}
We have the following bound for arbitrary nonincreasing density $f(x)$, $0\leq x\leq 1$,
\begin{equation}\label{eq:2.4}
    \int\limits_0^1 f^2(x)\cdot x\,dx\leq\frac{1}{2}\,.
\end{equation}
\end{lemma}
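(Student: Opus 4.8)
The plan is to combine the pointwise inequality from Lemma \ref{lem:2.1} with an elementary integration. Since $f$ is a density it is nonnegative, and Lemma \ref{lem:2.1} gives $f(x)\cdot x\leq F(x)$ for every $x\in[0,1]$. Multiplying this inequality by $f(x)\geq 0$ yields the pointwise bound
\begin{equation*}
    f^2(x)\cdot x=f(x)\cdot\bigl(f(x)\cdot x\bigr)\leq f(x)\cdot F(x), \quad 0\leq x\leq 1,
\end{equation*}
and hence $\int_0^1 f^2(x)\cdot x\,dx\leq\int_0^1 f(x)F(x)\,dx$. So everything reduces to evaluating the right-hand integral.

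Next I would recognize the right-hand integral as $\int_0^1 F(x)\,dF(x)$. Concretely, because $F$ is absolutely continuous with $f=F'$ almost everywhere (by \eqref{eq:1.1}), the function $x\mapsto \tfrac12 F^2(x)$ is also absolutely continuous on $[0,1]$ — it is the composition of the absolutely continuous $F$ with the function $t\mapsto \tfrac12 t^2$, which is Lipschitz on the bounded range of $F$ — and its derivative equals $F(x)f(x)$ for a.e.\ $x$. The fundamental theorem of calculus for absolutely continuous functions then gives
\begin{equation*}
    \int\limits_0^1 f(x)F(x)\,dx=\frac{F^2(1)-F^2(0)}{2}=\frac{1-0}{2}=\frac12\,,
\end{equation*}
using $F(0)=0$ and $F(1)=1$ from \eqref{eq:1.1}. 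Chaining the two displays establishes \eqref{eq:2.4}.

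I do not expect a genuine obstacle here; the only point requiring a little care is the justification that $\int_0^1 F f\,dx=\tfrac12\bigl[F^2\bigr]_0^1$, i.e.\ that one may integrate $F\,dF$ by the usual rule. This is exactly where absolute continuity of $F$ (guaranteed in the setup around \eqref{eq:1.1}) is used, and it is the reason the bound holds with no smoothness assumption on $f$ beyond monotonicity. Everything else is the one-line application of Lemma \ref{lem:2.1} together with $f\geq 0$.
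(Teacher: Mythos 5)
Your proof is correct and follows essentially the same route as the paper: bound $f^2(x)\,x\leq f(x)F(x)$ via Lemma \ref{lem:2.1} and then evaluate $\int_0^1 f(x)F(x)\,dx=\tfrac12\int_0^1 dF^2(x)=\tfrac12$. Your extra remarks justifying the identity $\int_0^1 Ff\,dx=\tfrac12[F^2]_0^1$ through the absolute continuity of $F$ simply make explicit a step the paper states without comment.
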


\begin{proof}
We use the the inequality \eqref{eq:2.1} and write
$$      \int\limits_0^1 f(x)(f(x)\cdot x)\,dx\leq \int\limits_0^1 f(x)\cdot F(x)\,dx=\int\limits_0^1 \frac{1}{2}\,dF^2(x)=\frac{1}{2}\,.   \qedhere     $$
\end{proof}

From the latter lemma we can write
\begin{equation}\label{eq:2.5}
    \int\limits_0^1 \widehat{f}_n{}^{2}(x)\cdot x\,dx\leq\frac{1}{2}\,, \quad \int\limits_0^1 (\widehat{f}_n(x)-f(x))^2x\,dx\leq 1.
\end{equation}
Introduce now the notations
\begin{equation}\label{eq:2.6}
    \widehat{G}_n(x)=\widehat{F}_n(x)-F(x), \quad \widehat{g}_n(x)=\widehat{f}_n(x)-f(x), \;\; 0\leq x\leq 1,
\end{equation}
and note that
\begin{equation}\label{eq:2.7}
    \widehat{G}_n(x)=\int\limits_0^x \widehat{g}_n(y)\,dy=\int\limits_0^x \widehat{g}_n(y-)\,dy, \;\; 0\leq x\leq 1,
\end{equation}
where
\begin{equation}\label{eq:2.8}
    \widehat{g}_n(x-)=\widehat{f}_n(x-)-f(x-), \;\; 0<x\leq 1,
\end{equation}
and note also the obvious equalities
\begin{multline}\label{eq:2.9}
    \int\limits_0^1 (\widehat{f}_n(x-)-f(x))^2x\,dx \\
    =\int\limits_0^1 (\widehat{f}_n(x-)-f(x-))^2x\,dx=\int\limits_0^1 (\widehat{f}_n(x)-f(x))^2x\,dx.
\end{multline}

We will establish the following non-asymptotic result valid with probability one for any number $n$ of i.i.d. observations $X_1,X_2,\dots,X_n$, having nonincreasing (and unbounded, in general) unknown probability density function $f(x)$, $0\leq x\leq 1$,

\begin{thm}[Main result]\label{th:2.1}
The weighted integrated square error of the Grenander estimator $\widehat{f}_n(x-)$, $0<x\leq 1$, is bounded with probability one by the Kolmogorov--Smirnov statistic, that is
\begin{equation}\label{eq:2.10}
    \int\limits_0^1 (\widehat{f}_n(x-)-f(x))^2x\,dx\leq 2\sup_{0\leq x\leq 1} |F_n(x)-F(x)| \;\;\text{$(P$-a.s.$)$}.
\end{equation}
\end{thm}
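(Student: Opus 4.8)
The plan is to work entirely with the integrated error written through $\widehat G_n$ and to exploit concavity of $\widehat F_n$ versus concavity of $F$. Starting from the rightmost equality in \eqref{eq:2.9}, I would write
\begin{equation*}
    \int_0^1 (\widehat f_n(x)-f(x))^2 x\,dx = \int_0^1 \widehat g_n(x)\,\bigl(\widehat g_n(x)\,x\bigr)\,dx,
\end{equation*}
and then try to integrate by parts so that the factor $\widehat g_n(x)\,x\,dx$ gets converted into something expressed via $\widehat G_n(x)=\int_0^x \widehat g_n(y)\,dy$, whose supremum norm we can ultimately compare with $\sup|F_n-F|$. The natural antiderivative bookkeeping is: since $d\widehat G_n(x)=\widehat g_n(x)\,dx$, the quantity $\widehat g_n(x)\,x\,dx = x\,d\widehat G_n(x) = d\bigl(x\,\widehat G_n(x)\bigr) - \widehat G_n(x)\,dx$. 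Substituting and integrating by parts a second time on the surviving $\int \widehat g_n(x)\,\widehat G_n(x)\,dx = \tfrac12 \widehat G_n^2(1)$-type term should collapse a lot; the boundary terms at $0$ are controlled by \eqref{eq:2.3} and \eqref{eq:2.1} (which force $\widehat f_n(x)x\to 0$ and $f(x)x\to 0$, hence $\widehat g_n(x)x\to 0$), and at $x=1$ we have $\widehat G_n(1)=\widehat F_n(1)-F(1)=1-1=0$ by \eqref{eq:1.11} and \eqref{eq:1.1}.

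After the integration-by-parts cleanup I expect the integral to reduce to something of the shape $-\,c\int_0^1 \widehat G_n(x)\,d\bigl(\text{monotone thing}\bigr)$ or $c\int_0^1 \widehat G_n(x)\,\bigl(f(x)-\widehat f_n(x)\bigr)\,dx$ plus manageable remainders. The crucial structural input is that $\widehat F_n$ is the \emph{least concave majorant} of $F_n$, so $\widehat F_n(x)\ge F_n(x)$ everywhere, while on the other side $F$ is itself concave and $|F_n-F|\le \sup|F_n-F|$. These two facts pin $\widehat G_n(x)=\widehat F_n(x)-F(x)$ between $F_n(x)-F(x)\ge -\sup|F_n-F|$ from below and, on the other side, a bound coming from the fact that the least concave majorant cannot exceed $F$ by more than the sup distance plus the concavity slack — more precisely, at any point $x$ the value $\widehat F_n(x)$ is a convex combination of values $F_n(x_i)$ at touch points $x_i$, and concavity of $F$ gives $F(x)\ge$ the same convex combination of $F(x_i)$, so $\widehat F_n(x)-F(x)\le \max_i (F_n(x_i)-F(x_i)) \le \sup|F_n-F|$. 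Hence $|\widehat G_n(x)|\le \sup_{0\le x\le 1}|F_n(x)-F(x)|$ for all $x$, which is the one geometric lemma doing the real work.

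With $\|\widehat G_n\|_\infty \le K := \sup_{0\le x\le 1}|F_n(x)-F(x)|$ in hand, I would bound the reduced integral. If the reduction yields $\int_0^1 \widehat g_n(x)\,(\widehat g_n(x)x)\,dx = 2\int_0^1 \widehat G_n(x)\,dH(x)$ for some nonincreasing $H$ with total variation $1$ (the densities integrate to $1$), then the estimate $\le 2K\cdot\mathrm{Var}(H) = 2K$ drops out; if instead it takes the form $2\int_0^1 \widehat G_n(x)\,\bigl(f(x)\,dx - d\widehat F_n(x)\bigr)$ or similar, I would again use $|\widehat G_n|\le K$ together with the fact that $f(x)dx$ and $d\widehat F_n(x)$ are each probability measures, so the signed measure against which we integrate has total mass bounded suitably, again producing the factor $2K$. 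The one subtlety to watch is sign control: because $\widehat g_n$ changes sign, I cannot just pull $\widehat G_n$ out in absolute value inside a raw $\int \widehat g_n\,\widehat G_n$ unless that integral has itself been turned (via the concavity of both $\widehat F_n$ and $F$) into a single-signed or total-variation-controlled object.

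The main obstacle, then, is not the integration by parts (routine, modulo the boundary terms, which Lemmas~\ref{lem:2.1}--\ref{lem:2.2} and \eqref{eq:2.3}, \eqref{eq:1.11} handle) but getting the \emph{sharp} constant $2$: one must exploit that $\widehat F_n-F$ is not merely bounded by $K$ but is $\ge F_n-F\ge -K$ \emph{and} $\le K$ simultaneously, i.e. use both the majorant property of $\widehat F_n$ and the concavity of $F$ at the same time, rather than a cruder bound that would cost an extra factor. I would therefore organize the final estimate so that $\widehat G_n$ is integrated against a single signed measure of total variation exactly $1$ (coming from the two densities each having mass $1$), so that $|\,\cdot\,| \le 2\cdot\|\widehat G_n\|_\infty\cdot 1 \le 2K$, with no slack.
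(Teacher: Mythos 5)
Your plan is, at its core, the same argument the paper gives: the double integration by parts with the weight $x$ reduces the weighted integrated square error to $-\int_0^1 x\,\widehat G_n(x)\,d\widehat g_n(x)$ (the term $\int_0^1 \widehat g_n\widehat G_n\,dx$ vanishes because $\widehat G_n(0)=\widehat G_n(1)=0$), the boundary terms at $0$ are killed by $x\widehat f_n(x)\to 0$ and $xf(x)\to 0$ from Lemma \ref{lem:2.1} and \eqref{eq:2.3}, and the surviving integral is bounded by $\sup|\widehat G_n|\cdot\int_0^1 x\,d(\mathrm{var}\,\widehat g_n)\le\sup|\widehat G_n|\cdot\int_0^1(\widehat f_n(x)+f(x))\,dx=2\sup|\widehat G_n|$ — exactly the mechanism of \eqref{eq:2.12}--\eqref{eq:2.17}, with the factor $2$ coming from the two unit-mass densities, as you anticipated. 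The one genuine difference is the final step: the paper simply invokes Marshall's lemma for $\sup|\widehat F_n-F|\le\sup|F_n-F|$, whereas you prove it inline (lower bound from $\widehat F_n\ge F_n$, upper bound by writing $\widehat F_n(x)$ as a convex combination of $F_n$ at touch points and using concavity of $F$); that argument is correct here, since $F$ is concave and $\widehat F_n$ agrees with $F_n$ at $0$ and $1$, and it makes the proof self-contained at the cost of redoing a known lemma. Two small things to fix when writing it up: (i) of your two guesses for the reduced form, $2\int_0^1\widehat G_n\,(f\,dx-d\widehat F_n)=-2\int_0^1\widehat G_n\,d\widehat G_n=0$, so it cannot be the right object, and likewise the relevant ``mass'' is not the total variation of a bounded-variation $H$ with variation $1$ but the $x$-weighted variation of $\widehat g_n$, which is $\le 2$ only after a further integration by parts using Lemma \ref{lem:2.1}; the correct reduced object is $-\int_0^1 x\,\widehat G_n\,d\widehat g_n$, which your own bookkeeping already produces; (ii) since $f$ may be unbounded near $0$ (hence $d\widehat g_n$ may have infinite mass there), the integration by parts should be carried out on $[\delta,1]$ with $\delta\downarrow 0$ at the end, as the paper does, Lemma \ref{lem:2.1} guaranteeing that the $\delta$-boundary terms $\delta\widehat f_n(\delta)$, $\delta f(\delta)$ and $\widehat G_n(\delta)$ all vanish in the limit.
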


\begin{proof}
The left-hand side of the above inequality is bounded by $1$ according to Lemma \ref{lem:2.2} (see the inequality \eqref{eq:2.5}). Take $\delta>0$ arbitrary small, $0<\delta<1$. The functions $\widehat{G}_n(x)$ and $\widehat{g}_n(x)$, $0\leq x\leq 1$, are right-continuous functions of bounded variation on the interval $[\delta,1]$ (we remind that $\lim\limits_{x\downarrow 0}f(x)$ can be equal to $+\infty)$.

The important formula of the integration by parts is valid for the functions of bounded variation $\widehat{G}_n(x)$ and $\widehat{g}_n(x)$ on the interval $[\delta,1]$ and has the following form (see Hewitt, Stromberg (1975), Th. 21.67)
\begin{equation}\label{eq:2.11}
\begin{gathered}
    d(\widehat{G}_n\cdot\widehat{g}_n)=\widehat{G}_n\cdot d\widehat{g}_n+\widehat{g}_n(-)\cdot d\widehat{G}_n, \;\;\text{or}\;\; \\
            \widehat{g}_n(-)\cdot d\widehat{G}_n=d(\widehat{G}_n\cdot \widehat{g}_n)-\widehat{G}_n\cdot d\widehat{g}_n,
\end{gathered}
\end{equation}
which after multiplication by $x$, and the subsequent integration, becomes
\begin{equation}\label{eq:2.12}
    \int\limits_{\delta}^1 x\cdot \widehat{g}_n(-)\,d\widehat{G}_n=\int\limits_{\delta}^1 x\,d(\widehat{G}_n\cdot \widehat{g}_n)-\int\limits_{\delta}^1 x\cdot \widehat{G}_n\,d\widehat{g}_n.
\end{equation}
We have
\begin{gather}
    \int\limits_{\delta}^1 x\cdot \widehat{g}_n(x-)\,d\widehat{G}_n(x)=\int\limits_{\delta}^1 (\widehat{g}_n(x-))^2x\,dx, \label{eq:2.13} \\
    \int\limits_{\delta}^1 x\,d(\widehat{G}_n(x)\cdot\widehat{g}_n(x))=x\cdot\widehat{G}_n(x)\cdot\widehat{g}_n(x)\Big|_{\delta}^1-\int\limits_{\delta}^1 \widehat{G}_n(x)\cdot\widehat{g}_n(x)\,dx \nonumber \\
    =-\delta\widehat{G}_n(\delta)\cdot\widehat{g}_n(\delta)-\int\limits_{\delta}^1 \frac{1}{2}\,d(\widehat{G}_n(x))^2=
                -\delta\cdot\widehat{G}_n(\delta)\cdot\widehat{g}_n(\delta)+\frac{1}{2}\,(\widehat{G}_n(\delta))^2, \nonumber
\end{gather}
as $\widehat{G}_n(1)=0$. Thus we get
\begin{equation}\label{eq:2.14}
    \int\limits_{\delta}^1 (\widehat{g}_n(x-))^2x\,dx=-\delta\widehat{G}_n(\delta)\cdot\widehat{g}_n(\delta)+\frac{1}{2}(\widehat{G}_n(\delta))^2-\int\limits_{\delta}^1 x\cdot \widehat{G}_n(x)\,d\widehat{g}_n(x).
\end{equation}

Let us bound the last term of the latter equality \eqref{eq:2.14}
\begin{multline}
    \bigg|-\int\limits_{\delta}^1 x\cdot\widehat{G}_n(x)\,d\widehat{g}_n(x)\bigg|\leq \int\limits_{\delta}^1 x\cdot|\widehat{G}_n(x)|\,d({\rm var}\,\widehat{g}_n)(x) \\
    \leq \sup_{0\leq x\leq 1} |\widehat{G}_n(x)|\int\limits_{\delta}^1 x\,d\big((-\widehat{f}_n(x))+(-f(x))\big). \label{eq:2.15}
\end{multline}

We have
\begin{multline*}
    \int\limits_{\delta}^1 x\,d\big((-\widehat{f}_n(x))+(-f(x))\big) \\
    =x(-\widehat{f}_n(x))+(-f(x))\Big|_{\delta}^1+\int\limits_{\delta}^1 (\widehat{f}_n(x)+f(x))\,dx \\
    \leq \delta(\widehat{f}_n(\delta)+f(\delta))+\int\limits_{\delta}^1 (\widehat{f}_n(x)+f(x))\,dx
\end{multline*}
as $-\widehat{f}_n(1)-f(1)\leq 0$, hence we get the bound
\begin{multline}
    \bigg|-\int\limits_{\delta}^1 x\cdot\widehat{G}_n(x)\,d\widehat{g}_n(x)\bigg| \\
    \leq \sup_{0\leq x\leq 1} |\widehat{G}_n(x)|\,\bigg[\delta(\widehat{f}_n(\delta)+f(\delta))+\int\limits_{\delta}^1 (\widehat{f}_n(x)+f(x))\,dx\bigg]. \label{eq:2.16}
\end{multline}
From the equality \eqref{eq:2.14} and the bound \eqref{eq:2.16} we come to the inequality
\begin{multline}
    \int\limits_{\delta}^1 (\widehat{g}_n(x-))^2x\,dx\leq\delta(\widehat{f}_n(\delta)+f(\delta))|\widehat{G}_n(\delta)|+\frac{1}{2}\,(\widehat{G}_n(\delta))^2 \\
    +\sup_{0\leq x\leq 1} |\widehat{F}_n(x)-F(x)|\bigg[\delta(\widehat{f}_n(\delta)+f(\delta))+\int\limits_{\delta}^1 (\widehat{f}_n(x)+f(x))\,dx\bigg]. \label{eq:2.17}
\end{multline}

Taking into account the Lemmas \ref{lem:2.1} and \ref{lem:2.2}, in particular the relations \eqref{eq:2.1}, \eqref{eq:2.3} and \eqref{eq:2.5} and tending the small parameter $\delta$ to $0$, we get
\begin{equation}\label{eq:2.18}
    \int\limits_0^1 (\widehat{f}_n(x-)-f(x))^2x\,dx\leq 2\sup_{0\leq x\leq 1} |\widehat{F}_n(x)-F(x)| \;\;\text{$(P$-a.s.$)$}.
\end{equation}
Next we apply the well-known Marshall's lemma which states that
\begin{equation}\label{eq:2.19}
    \sup_{0\leq x\leq 1} |\widehat{F}_n(x)-F(x)|\leq \sup_{0\leq x\leq 1} |F_n(x)-F(x)|
\end{equation}
and ultimately get the desired inequality
\begin{eqnarray}\label{eq:2.20}
    &\displaystyle \qquad \int\limits_0^1 (\widehat{f}_n(x-)-f(x))^2x\,dx\leq 2\sup_{0\leq x\leq 1} |F_n(x)-F(x)| \;\;\text{$(P$-a.s.$)$}. \qquad \qedhere
\end{eqnarray}
\end{proof}

The obtained inequality has several interesting implications. Firstly by the Glivenko--Cantelli theorem we know that with probability one
\begin{equation}\label{eq:2.21}
    \sup_{0\leq x\leq 1} |F_n(x)-F(x)|\longrightarrow 0\;\;\text{if}\;\; n\to\infty,
\end{equation}
hence we get the consistency of the Grenander estimator $\widehat{f}_n(x-)$, $0<x\leq 1$, in the sense of the weighted integrated square distance \eqref{eq:1.6}.

Next, consider the celebrated Dvoretzky--Kiefert--Wolfowitz inequality
\begin{equation}\label{eq:2.22}
    P\Big(\sqrt{n}\,\sup_{0\leq x\leq 1} |F_n(x)-F(x)|>\lambda\Big)\leq 2e^{-2\lambda^2} \;\;\text{for arbitrary}\;\; \lambda>0,
\end{equation}
where the sharp constant multiplier $2$ is due to Pascal Massart.

We get from inequalities \eqref{eq:2.20} and \eqref{eq:2.22}
\begin{multline}
    P\bigg(\sqrt{n}\int\limits_0^1 (\widehat{f}_n(x-)-f(x))^2x\,dx\leq \lambda\bigg) \\
    \geq P\Big(\sqrt{n}\,\sup_{0\leq x\leq 1} |F_n(x)-F(x)|\leq\frac{\lambda}{2}\Big)\geq 1-2\,e^{-\lambda^2/2}. \label{eq:2.23}
\end{multline}

Fix $\alpha$, $0<\alpha<1$, arbitrary small and take $\lambda=\sqrt{2\,\ln\frac{2}{\alpha}}$\,. Then we obtain from the latter inequality the following interesting estimate
\begin{equation}\label{eq:2.24}
    P\bigg(\int\limits_0^1 (\widehat{f}_n(x-)-f(x))^2x\,dx\leq \sqrt{2\,\ln\frac{2}{\alpha}}\cdot n^{-1/2}\bigg)\geq 1-\alpha.
\end{equation}
This estimate says that, for probability $1-\alpha$ close to $1$, we can find such a number $n$ of observations $X_1,X_2,\dots,X_n$, that the weighted integrated square error \eqref{eq:2.9} will be as small as required with the preseribed  probability $1-\alpha$.

From inequalities \eqref{eq:2.20} and \eqref{eq:2.22} we get also the following bound of the quadratic risk of the Grenander estimator
\begin{equation}\label{eq:2.25}
    E\int\limits_0^1 (\widehat{f}_n(x-)-f(x))^2x\,dx\leq \sqrt{2\pi}\cdot n^{-1/2}.
\end{equation}

\end{document}